\documentclass[12pt]{amsart}


\usepackage{amsmath, color} 
\usepackage{amssymb} 
\usepackage{amsthm}
\usepackage{hyperref}
\usepackage{amscd}
\usepackage{cleveref}
\usepackage{tikz}
\usepackage[all]{xy}
\usepackage[T1]{fontenc}
\usepackage[latin9]{inputenc}
\usepackage{amstext}

\global\long\def\comult{\bar{e}}%

\global\long\def\dl{\operatorname{del}}%

\global\long\def\lk{\operatorname{link}}%


\textwidth=16.00cm
\textheight=22.00cm
\topmargin=0.00cm
\oddsidemargin=0.00cm
\evensidemargin=0.00cm
\headheight=0cm
\headsep=1cm
\headsep=0.5cm 
\numberwithin{equation}{section}
\hyphenation{semi-stable}
\setlength{\parskip}{3pt}


\newtheorem{theorem}{Theorem}[section]
\newtheorem{lemma}[theorem]{Lemma}

\newtheorem{corollary}[theorem]{Corollary}
\newtheorem{conjecture}[theorem]{Conjecture}

\theoremstyle{definition}
\newtheorem{definition}[theorem]{Definition} 
\newtheorem{remark}[theorem]{Remark}
\newtheorem{example}[theorem]{Example}


\def\f0{\mathbf{0}}

\newcommand\del{\operatorname{\text{del}}}

\DeclareMathOperator{\cod}{{codim}}

\keywords{simplicial complex, vertex decomposable}
\subjclass[2000]{05E40,05E45,13F55}

\begin{document}

\title
{Simplicial complexes with many facets are 
vertex decomposable}
\date{\today}

\author[A. Dochtermann]{Anton Dochtermann}
\address[A. Dochtermann]{Department
of Mathematics,
Texas State University, San Marcos, TX 78666, USA}
\email{dochtermann@txstate.edu}
\urladdr{\url{https://dochtermann.wp.txstate.edu/}}

\author[R. Nair]{Ritika Nair}
\address[R. Nair]{Department of Mathematics, Oklahoma State University, Stillwater, OK 74078, USA}
\email{ritika.nair@okstate.edu}
\urladdr{\url{https://sites.google.com/view/ritika-nair}}

\author[J. Schweig]{Jay Schweig}
\address[J. Schweig]{
Department of Mathematics, Oklahoma State University, Stillwater, OK 74078, USA}
\email{jay.schweig@okstate.edu}
\urladdr{\url{https://math.okstate.edu/people/jayjs/}}

\author[A. Van Tuyl]{Adam Van Tuyl}
\address[A. Van Tuyl]
{Department of Mathematics and Statistics\\
McMaster University, Hamilton, ON, L8S 4L8, Canada}
\email{vantuyl@math.mcmaster.ca}
\urladdr{\url{https://ms.mcmaster.ca/~vantuyl/}}

\author[R. Woodroofe]{Russ Woodroofe}
\address[R. Woodroofe]
{FAMNIT, University of Primorska, 6000 Koper, Slovenia}
\email{russ.woodroofe@famnit.upr.si}
\urladdr{\url{https://osebje.famnit.upr.si/~russ.woodroofe/}}

\begin{abstract}
Suppose $\Delta$ is a pure simplicial complex on $n$ vertices having dimension $d$ and let $c = n-d-1$ be its codimension in the simplex.
Terai and Yoshida proved that
if the number of facets of 
$\Delta$ is at least $\binom{n}{c}-2c+1$,
then $\Delta$ is Cohen-Macaulay.  We
improve this result by showing that
these hypotheses imply 
the stronger condition that $\Delta$ is vertex decomposable.  We give examples to show that
this bound is optimal, and that the conclusion cannot be strengthened to the class of matroids or shifted complexes. We explore an application to Simon's Conjecture and discuss connections to other results from the literature.
\end{abstract}
\maketitle

\section{Introduction}

Suppose $\Delta$ is a simplicial complex on
vertex set $[n] = \{1,\ldots,n\}$.  Recall that $\Delta$ is {\it pure} if
all the facets of $\Delta$ (the maximal elements of $\Delta$ under 
inclusion) have the same cardinality.  As
first shown by Terai and Yoshida \cite[Theorem 3.1]{Terai/Yoshida:2006}, if the number
of facets of $\Delta$ is ``large enough,''\! then $\Delta$ is Cohen-Macaulay.  In other words, the simplicial
complex $\Delta$ enjoys additional topological 
properties that can be detected by simply
counting the number of facets.

In this short note we strengthen Terai and Yoshida's results.
As in their paper, we express
our theorem in terms of the codimension of
$\Delta$ relative to the simplex on the same vertex set.  Recall that the {\it dimension} of $\Delta$
is given by $\dim\Delta = \max\{\dim F ~|~ F \in \Delta\}$, where
$\dim F = |F|-1$.   If $\Delta$ has $n$ vertices, then the \emph{codimension} of $\Delta$, denoted $\cod{\Delta}$, is given by $c = n - \dim\Delta - 1$ (the number of vertices in the complement of a facet). 
Our main result can now be stated as follows.

\begin{theorem}\label{maintheorem}
Let $\Delta$ be a pure simplicial complex on vertex set $[n]$ of codimension $c$. If $\Delta$ has 
at least $\binom{n}{c}-2c +1$ facets, then $\Delta$
is vertex decomposable.
\end{theorem}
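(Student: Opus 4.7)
The plan is to induct on $n$, finding a shedding vertex whose deletion and link each inherit the hypothesis. It is convenient to pass to complements: set
\[
\mathcal{M} = \{B \in \binom{[n]}{c} : [n]\setminus B \notin \Delta\},
\]
so the hypothesis reads $|\mathcal{M}| \le 2c-1$. For each $v \in [n]$ write $m_v = |\{B \in \mathcal{M} : v \in B\}|$; then $\sum_v m_v = c|\mathcal{M}|$.

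\textbf{Reduction to a shedding lemma.} A direct translation shows that $\del(v)$ is pure of codimension $c-1$ on $n-1$ vertices with missing family of size $m_v$, while $\lk(v)$ is pure of codimension $c$ on $n-1$ vertices with missing family of size $|\mathcal{M}| - m_v \le 2c-1$. The inductive hypothesis thus applies to $\lk(v)$ automatically, and to $\del(v)$ provided $m_v \le 2(c-1)-1 = 2c-3$. It therefore suffices to produce a shedding vertex $v$ with $m_v \le 2c-3$. The crucial observation is that $v$ fails to be shedding precisely when there is some $A \in \mathcal{C} := \binom{[n]}{c} \setminus \mathcal{M}$ with $v \notin A$ such that all $c$ ``swap sets'' $(A \cup \{v\}) \setminus \{a\}$, $a \in A$, lie in $\mathcal{M}$; each of these contains $v$, so non-shedding forces $m_v \ge c$. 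In particular, any vertex with $m_v \le c-1$ is automatically shedding, and since $c - 1 \le 2c - 3$ for $c \ge 2$, such a vertex finishes the step.

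\textbf{The tight regime and main obstacle.} If instead $m_v \ge c$ for every $v$, then summing gives $cn \le c|\mathcal{M}| \le c(2c-1)$, so $n \le 2c-1$; this ``tight regime'' is the main obstacle. Here I would exploit the rigid structure of a non-shedding vertex: its swap family consists of the $c$ subsets of a single $(c+1)$-set $T_v = A \cup \{v\}$ containing $v$, with $T_v \setminus \{v\}$ the unique $c$-subset of $T_v$ landing in $\mathcal{C}$. A double count of such ``special'' $(c+1)$-sets $T$ (those with $c$ of their $c$-subsets in $\mathcal{M}$) against the elements of $\mathcal{M}$ they contain, combined with a Markov-type bound $\#\{v : m_v \ge 2c-2\} \le \frac{c(2c-1)}{2c-2} < c+1$, should still produce a shedding vertex with $m_v \le 2c-3$. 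The base cases---$c \le 1$, $\binom{n}{c} < 2c$ (hypothesis vacuous), or $\dim \Delta \le 0$---are immediate; for $c = 1$, $\Delta$ is either the boundary of the simplex or its boundary minus one facet, both visibly vertex decomposable. A minor technical point is that some $u \in [n]\setminus\{v\}$ may become isolated in $\del(v)$; in that case $\del(v)$ should be viewed on its reduced vertex set, and a Pascal's identity computation confirms that the inductive hypothesis $\binom{n'}{c'} - 2c' + 1$ still applies.
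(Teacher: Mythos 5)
Your proposal takes essentially the same approach as the paper (pass to the antifacet/complement picture, find a shedding vertex $v$ whose deletion and link each inherit the hypothesis), and the dichotomy you set up — either some $m_v \le c-1$ (hence shedding and good for the deletion), or every vertex has $m_v \ge c$ — captures the right idea. But there are two genuine gaps.

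\textbf{The link.} You assert that ``a direct translation shows that $\lk(v)$ is pure of codimension $c$ on $n-1$ vertices with missing family of size $|\mathcal{M}| - m_v$,'' but this is false in general: if $\{v,w\}$ is a missing edge of $\Delta$ then $w$ is \emph{not} a vertex of $\lk_\Delta(v)$, so the link lives on fewer than $n-1$ vertices and its codimension drops below $c$. Since the required bound $2b-1$ shrinks as the codimension $b$ shrinks, you cannot just say the inductive hypothesis applies automatically; you actually have to check it. The paper supplies this via its Lemma~\ref{lem:MissingFaces} (under the hypothesis, no minimal nonface has fewer than $\dim\Delta$ vertices, and at most one has exactly $\dim\Delta$ vertices) and then Corollary~\ref{cor.antifacetlinkbound}, where the case $\dim\Delta = 2$ — where a single missing edge may exist — requires a separate computation showing that the extra antifacets lost are at least $n-2$, enough to offset the drop in codimension. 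This is a real step you are missing, not a formality.

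\textbf{The tight regime.} When all $m_v \ge c$, you correctly deduce $n \le 2c-1$ and correctly identify the rigid structure of a non-shedding vertex, but then your argument stops at ``should still produce a shedding vertex.'' That is the heart of the matter, and the sketch you give (double count of special $(c+1)$-sets plus a Markov bound on $\#\{v : m_v \ge 2c-2\}$) does not by itself produce a \emph{shedding} vertex with $m_v \le 2c-3$. The missing ingredient is the paper's Lemma~\ref{lem:NonsheddingGivesShedding}(2): if $w$ is non-shedding via the facet $F$, then every $v \in F\setminus\{w\}$ lies in at least $c$ antifacets, hence $m_v \le |\mathcal{M}| - c \le c-1$. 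In your tight regime this is an outright contradiction, so in fact all vertices must be shedding there; combined with the two-good-vertices version of the counting bound (the paper's Lemma~\ref{lem:DoubleCounting}, which guarantees at least two vertices with $m_v \le 2c-3$, not just one, so that the all-but-one-shedding case also goes through), the proof closes. Your structural observation about the ``swap family'' is exactly the right raw material for this lemma, but you never extract the crucial consequence that the \emph{other} vertices of $F$ have small $m_v$. Fill in both of these and the argument matches the paper's.
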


The class of vertex decomposable simplicial complexes was introduced by Provan and Billera in
\cite{Provan/Billera:1980}, and have a recursive definition (see
Section 2 for the formal statement).  
Terai and Yoshida \cite[Theorem 3.1]{Terai/Yoshida:2006} showed that, under the same hypotheses as Theorem
\ref{maintheorem}, the simplicial 
complex $\Delta$ is
Cohen-Macaulay.  Their result was stated in terms of the multiplicity of the Stanley-Reisner ring, which agrees with the number of facets of $\Delta$ when $\Delta$ is pure. Since being vertex decomposable
implies being Cohen-Macaulay, our main theorem
strengthens their result.  
To prove  \cite[Theorem 3.1]{Terai/Yoshida:2006}, Terai and Yoshida used an algebraic approach, employing the Stanley-Reisner correspondence between simplicial complexes and square-free monomials.
  Our proof, in contrast, requires only simple tools from topological combinatorics.

The bound in Theorem~\ref{maintheorem} is tight.  In Example \ref{example tight} we describe a family of simplicial complexes on $n$ vertices of codimension $c$ with $\binom{n}{c} - 2c$ facets, each of which fails to be vertex decomposable. In Remark \ref{remark matroid} we also show that Theorem \ref{maintheorem} cannot be strengthened to conclude that $\Delta$ is a matroid complex or shifted complex (both of which are classes known to be vertex decomposable), even if we require $\binom{n}{c}-2$ facets.

It is not hard to see that a simplicial complex $\Delta$ on vertex set $[n]$ with dimension $d$ and codimension $c$ can have more than $\binom{n}{c}-2c$ faces of dimension $d$, but fail to be pure.  For instance, consider the $1$-dimensional complex consisting of a triangle and an isolated vertex. However, we observe in Lemma \ref{morefacets} that if we require at least $\binom{n}{c}-c$ facets of this top dimension, then purity is guaranteed. This leads to Corollary~\ref{corollary even more} for arbitrary simplicial complexes, which strengthens \cite[Theorem 2.1]{Terai/Yoshida:2006}.

We also interpret our results in terms of square-free monomial ideals (the language used by Terai and Yoshida).  In Corollary \ref{corollary linear} we describe a sufficient condition for a square-free monomial ideal to have a linear resolution, which may be useful for researchers working in combinatorial commutative algebra.

Our main result also has connections to other work involving decompositions of simplicial complexes. Simon's Conjecture is a well-known open question regarding the structure of {\it shellable} complexes (see Conjecture \ref{conjecture Simon}). Combining our results with those from \cite{Coleman-Dochtermann-Geist-Oh}, we see that any counterexample to Simon's Conjecture cannot have ``too many'' facets.  On the other hand, a result of Laso\'{n} from \cite{Lason:2013} provides a sufficient condition for a complex $\Delta$ to be vertex decomposable in terms of extremal properties of the $f$-vector of $\Delta$. In Example~\ref{example fvector} we discuss how Theorem \ref{maintheorem} relates to the results of Laso\'{n}.

Our paper is organized as follows.  In Section 2 we provide the necessary definitions and background, and in particular recall relevant properties of vertex 
decomposable simplicial complexes.  In Section
\ref{section proof} we prove Theorem \ref{maintheorem}.  In Section \ref{section conclusion}, we address the optimality of Theorem \ref{maintheorem},  interpret our result in the language of square-free monomial ideals, and discuss the connection to Simon's Conjecture and  Laso\'{n}'s result.\\


\section{Background}

In this section, we recall additional
relevant definitions and results needed to prove our main theorem.
We refer the reader to \cite{Herzog/Hibi:2011} or \cite{Villarreal:2015} for additional background on combinatorial commutative algebra, and to \cite{Bjorner:1995} for topological combinatorics.
We continue to use the notation from the introduction.

Let $\Delta$ be a simplicial complex
on vertex set $V$, which we typically take as $V = [n] := \{1,2, \dots, n\}$.  An element $F \in \Delta$ is called a {\it face}, and a face not properly contained in any other face is called a {\it facet}. For each $i \in [n]$ we always assume that $\{i\}$ is a face of $\Delta$, which we call a vertex and denote simply as $i$. 
We will use the notation
$\Delta = \langle F_1,\ldots,F_s\rangle$ 
to denote the complete list of distinct facets. After fixing
a vertex $x \in [n]$, we can define two other simplicial
complexes from $\Delta$.  Namely, the {\it link} of $\{x\}$ is the simplicial complex
$${\rm link}_\Delta(\{x\}) = \{G \in \Delta ~|~
x \notin G,~~ G \cup \{x\} \in \Delta\},$$
and the {\it deletion} of $\{x\}$ is the 
complex
$${\rm del}_\Delta(\{x\}) = \{G \in \Delta ~|~ x \notin G\}.$$
We will abuse notation and 
write ${\rm link}_\Delta(x)$ (respectively,
${\rm del}_\Delta(x)$) instead of ${\rm link}_\Delta(\{x\})$ (respectively, ${\rm del}_\Delta(\{x\})$). A vertex $x$ is a 
{\it shedding vertex} if for any facet
$F \in \Delta$ with $x \in F$, there exists
a vertex $y \not\in F$ such that 
$(F \setminus \{x\})\cup \{y\}$
is also a face of $\Delta$. Equivalently, $x$ is a shedding vertex if every facet of ${\rm del}_\Delta(x)$ is a facet of $\Delta$.

Vertex decomposable simplicial 
complexes are then defined 
recursively as follows.

\begin{definition}\label{defn:vd}
A simplicial complex $\Delta$ is 
{\it vertex decomposable} if either 
\begin{enumerate}
    \item $\Delta = \{\emptyset\}$, or
     $\Delta$ is  a simplex (i.e.,
    $\Delta = \langle F \rangle$ for 
    some $F \subseteq [n]$), or 
    \item there exists a shedding vertex
    $x \in [n]$ such that $\lk_\Delta(x)$
    and $\dl_\Delta(x)$ are vertex
    decomposable.
\end{enumerate}
\end{definition}

\begin{remark}
We take this definition from Bj\"orner and Wachs 
\cite{Bjorner/Wachs:1997}.
All the complexes we consider will be pure, and in this case our definition recovers the notion of vertex decomposability introduced by Provan and Billera in \cite{Provan/Billera:1980}.
\end{remark}

We next collect some results regarding vertex decomposable complexes that will be useful for our study. Recall that a simplicial complex $\Delta$ is a {\it cone} if there exists a vertex $v \in V$ such that $v \in F$ for every facet $F \in \Delta$. This implies that there exists a simplicial complex $\Gamma$ on vertex set $V \backslash \{v\}$ such that every face of $\Delta$ is either a face of $\Gamma$, or else of the form $F \cup \{v\}$ for some face $F \in \Gamma$. In this case we write $\Delta = \Gamma \ast v$.
In the statement
below, the {\it $k$-skeleton} of a pure
simplicial complex $\Delta$ is (for $k \leq \dim \Delta$) the 
simplicial complex whose facets
are the faces of $\Delta$ having 
dimension $k$.

\begin{lemma}\label{vdprop}

The following properties regarding vertex decomposability hold.
\begin{enumerate}
\item 
\cite[Proposition 3.1.1]{Provan/Billera:1980} All $0$-dimensional complexes are vertex decomposable. 
\item 
\cite[Proposition 3.1.2]{Provan/Billera:1980} A pure $1$-dimensional complex $\Delta$ is vertex decomposable if and only if $\Delta$ is connected.
\item
\cite[Lemma 3.10]{Woodroofe:2011b}
Any $k$-skeleton of a vertex decomposable complex $\Delta$ is vertex decomposable.
\item 
\cite[Proposition 2.4]{Provan/Billera:1980}
A cone $\Delta = \Gamma \ast v$ is vertex decomposable if and only if $\Gamma$ is vertex decomposable.
\end{enumerate}
\end{lemma}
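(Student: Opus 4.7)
The plan is to prove each of the four parts by structural induction on the number of vertices, using the recursive form of Definition~\ref{defn:vd}. Part (1) is the clean base case: if $\Delta$ is a $0$-dimensional complex with more than one vertex, pick any vertex $v$. Then $\lk_\Delta(v) = \{\emptyset\}$ is vertex decomposable, $\dl_\Delta(v)$ is a $0$-dimensional complex with fewer vertices and so vertex decomposable by induction, and every facet of $\dl_\Delta(v)$ is a remaining isolated vertex, hence also a facet of $\Delta$.

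Part (2) is the most delicate step, and I expect it to be the main obstacle. For $(\Leftarrow)$, I would induct on the number of vertices of the pure $1$-dimensional connected complex $\Delta$. If $\Delta = K_2$ it is a simplex, so assume otherwise and split into two cases. If $\Delta$ has a pendant vertex $u$ with neighbor $w$, take $u$ as the shedding vertex: $\lk_\Delta(u) = \{w\}$ is $0$-dimensional and hence vertex decomposable by (1), and $\dl_\Delta(u)$ is connected (removing a leaf preserves connectivity) and remains pure $1$-dimensional (since $w$ retains at least one other edge as $\Delta\neq K_2$), hence vertex decomposable by induction. If $\Delta$ has no pendants, every vertex has degree at least $2$ and hence $\Delta$ contains a cycle; I would choose a non-cut vertex $v$ on such a cycle, so that $\dl_\Delta(v)$ is connected and pure $1$-dimensional, and induction again applies. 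For $(\Rightarrow)$, the base cases are immediate, and in the inductive step $\dl_\Delta(v)$ is a pure $1$-dimensional vertex decomposable complex on one fewer vertex, so connected by induction; every neighbor of $v$ in $\Delta$ lies in $\dl_\Delta(v)$, so the whole of $\Delta$ is connected through $v$.

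Parts (3) and (4) both proceed by transferring a shedding vertex of the ambient complex. In (3), let $\Sigma$ denote the $k$-skeleton of $\Delta$. If $v$ is a shedding vertex of $\Delta$, I would verify that it remains shedding in $\Sigma$: any facet $F$ of $\Sigma$ through $v$ is a $k$-face of $\Delta$ that extends (by purity) to a facet $F'$ of $\Delta$, and the shedding witness $y$ for $F'$ restricts to a witness for $F$, since $(F\setminus v)\cup\{y\} \subseteq (F'\setminus v)\cup\{y\}$ is a face of $\Delta$ with exactly $k+1$ elements. One then uses that the link and deletion of $v$ in $\Sigma$ are themselves the appropriate skeleta of the link and deletion of $v$ in $\Delta$, and closes the argument by induction on $|V|$.

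For (4), I note first that the cone vertex $v$ is never a shedding vertex of $\Gamma \ast v$ (unless $\Gamma \ast v$ is a simplex) because the facets of $\dl_{\Gamma\ast v}(v) = \Gamma$ are strictly smaller than the facets $F\cup\{v\}$ of $\Gamma \ast v$. Hence any shedding vertex of $\Gamma \ast v$ must already lie in $V(\Gamma)$. For such $u$, the facet correspondence $F \leftrightarrow F\cup\{v\}$ between $\Gamma$ and $\Gamma\ast v$ shows that $u$ is shedding in $\Gamma$ if and only if it is shedding in $\Gamma \ast v$, and the cone identities $\lk_{\Gamma\ast v}(u) = \lk_\Gamma(u)\ast v$ and $\dl_{\Gamma\ast v}(u) = \dl_\Gamma(u)\ast v$ let both implications be established simultaneously by induction on $|V|$.
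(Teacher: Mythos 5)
The paper gives no argument for this lemma at all: it is a list of citations to Provan--Billera and Woodroofe. So you are supplying proofs where the paper supplies references. Your arguments for (1), (2) and (4) are correct: the pendant/non-pendant split in (2) works (note you only need \emph{some} non-cut vertex, since purity of the deletion already follows from minimum degree $\geq 2$; existence follows, e.g., by taking a leaf of a spanning tree), and in (4) the facet correspondence and the identities $\lk_{\Gamma\ast v}(u)=\lk_\Gamma(u)\ast v$, $\dl_{\Gamma\ast v}(u)=\dl_\Gamma(u)\ast v$ do give both implications by a simultaneous induction.

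There is, however, a genuine gap in (3). Your induction only treats the branch of Definition~\ref{defn:vd} in which $\Delta$ has a shedding vertex, but the recursion also bottoms out at the other branch, $\Delta$ a simplex, and a simplex has \emph{no} shedding vertex (its unique facet is the whole vertex set, so no witness $y\notin F$ exists). For $k<\dim(\Delta)$ the $k$-skeleton of a simplex is not a simplex, so in this base case your transfer argument says nothing and vertex decomposability of $\Sigma$ still has to be proved. This is not a throwaway corner case: it is precisely the instance in which the paper invokes part (3), namely in Lemma~\ref{lem:removeone}, where $\partial\Delta_{n-1}$ is recovered as the $(n-3)$-skeleton of the simplex $\Delta_{n-1}$. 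The repair is short and fits your scheme: if $\Sigma$ is the $k$-skeleton of the simplex on $[n]$ with $k<n-1$, then every vertex $v$ is shedding in $\Sigma$ (a facet of $\Sigma$ through $v$ is a $(k+1)$-subset $F\subsetneq[n]$, so some $y\notin F$ exists and $(F\setminus\{v\})\cup\{y\}$ is again a $(k+1)$-subset, hence a face of $\Sigma$), and $\lk_\Sigma(v)$, $\dl_\Sigma(v)$ are the $(k-1)$- and $k$-skeleta of the simplex on $[n]\setminus\{v\}$, so an induction on $n$ closes this case. With that base case added, your proof of (3) — the shedding-vertex transfer using purity, together with $\lk_\Sigma(v)$ and $\dl_\Sigma(v)$ being skeleta of $\lk_\Delta(v)$ and $\dl_\Delta(v)$ — is sound for the pure complexes considered in this paper.
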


We also record the following observation.

\begin{lemma}\label{lem:removeone}
Suppose $\Delta$ is a pure simplicial complex on $[n]$ with $n \geq 3$ obtained by removing a single facet from the boundary of a simplex. Then $\Delta$ is vertex decomposable.
\end{lemma}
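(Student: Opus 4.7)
The plan is to exploit the cone structure that appears after removing a single facet from the boundary of a simplex. Without loss of generality, I would assume that the removed facet is $F^* = [n] \setminus \{n\}$, so that every remaining facet of $\Delta$ has the form $[n] \setminus \{i\}$ with $i \in [n-1]$, and in particular contains the vertex $n$. Thus $\Delta$ is a cone with apex $n$, and I may write $\Delta = \Gamma \ast n$ for some simplicial complex $\Gamma$ on $[n-1]$. By Lemma \ref{vdprop}(4), the problem then reduces to verifying that $\Gamma$ is vertex decomposable.

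Next, I would identify $\Gamma$ explicitly. A direct check shows that $\Gamma = \lk_\Delta(n)$ consists of exactly the proper subsets of $[n-1]$: for $G \subseteq [n-1]$, the set $G \cup \{n\}$ lies in $\Delta$ precisely when $G \subsetneq [n-1]$, since the condition $G \cup \{n\} \neq [n-1]$ is automatic. Hence $\Gamma$ is the boundary of the $(n-2)$-simplex on vertex set $[n-1]$.

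Finally, I would show that this boundary is vertex decomposable by observing that it coincides with the $(n-3)$-skeleton of the full simplex on $[n-1]$. Since the full simplex is vertex decomposable by Definition \ref{defn:vd}(1), Lemma \ref{vdprop}(3) finishes the argument; the small case $n=3$ fits this same framework, producing the $0$-skeleton of an edge, namely two points. I do not anticipate any genuine obstacle in this argument: the key observation — that removing a single facet from the boundary of a simplex necessarily produces a cone — is essentially immediate from the combinatorics, and the remaining work consists of invoking the structural results already collected in Lemma \ref{vdprop}.
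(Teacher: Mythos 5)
Your proof is correct and follows essentially the same route as the paper: normalize the removed facet to $[n]\setminus\{n\}$, observe that $\Delta$ is the cone with apex $n$ over the boundary of the simplex on $[n-1]$, identify that boundary as the $(n-3)$-skeleton of a simplex, and then invoke Lemma \ref{vdprop}(3) and (4). The only difference is cosmetic — you spell out the identification of the link $\Gamma = \lk_\Delta(n)$ a bit more explicitly than the paper does.
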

\begin{proof}
Suppose $\Delta$ has vertex set $[n]$, and without loss of generality suppose the facet removed to obtain $\Delta$ is $\{1,2, \dots, n-1\}$. Then the facets of $\Delta$ consist of all subsets of $[n]$ of the form $\{n\} \cup F$, where $F$ is any $(n-2)$-subset of $[n-1]$. Hence $\Delta = \partial \Delta_{n-1} \ast n$. Here $\partial \Delta_{n-1}$ denotes the boundary of the simplex on $[n-1]$, which can be recovered as the $(n-3)$-skeleton of $\Delta_{n-1}$. From Lemma \ref{vdprop} we conclude that $\Delta$ is vertex decomposable.
\end{proof}

The following lemma is immediate from the recursive definition of vertex decomposability, and we will use it without comment.
\begin{lemma} \label{lem_vd}
If $\mathcal{F}$ is a family of simplicial complexes, so that every $\Delta \in \mathcal{F}$ 
is either a simplex or has a shedding vertex v with the property that both 
$\lk_\Delta(v)$ and $\dl_\Delta(v)$ are in $\mathcal{F}$, then every complex in $\mathcal{F}$ is vertex decomposable.
\end{lemma}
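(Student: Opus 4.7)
The plan is to run the obvious structural induction, using the number of vertices of $\Delta$ as the induction parameter. The reason this works is that both $\lk_\Delta(v)$ and $\dl_\Delta(v)$ are complexes whose vertex sets are contained in $V \setminus \{v\}$, so each has at least one fewer vertex than $\Delta$. The shedding hypothesis on $v$ together with the closure property of $\mathcal{F}$ under the operations $\lk_\Delta(v)$ and $\dl_\Delta(v)$ is then exactly what is needed to invoke clause (2) of Definition \ref{defn:vd}, provided induction handles those smaller complexes.

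For the base case, any $\Delta \in \mathcal{F}$ on at most one vertex is automatically a simplex in the sense of Definition \ref{defn:vd}(1) (this absorbs the degenerate cases $\{\emptyset\}$ and $\langle \{i\}\rangle$), so it is vertex-decomposable immediately from that clause.

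For the inductive step, assume every element of $\mathcal{F}$ on fewer than $n$ vertices is vertex-decomposable, and let $\Delta \in \mathcal{F}$ be a complex on $n \ge 2$ vertices. If $\Delta$ is a simplex, Definition \ref{defn:vd}(1) again gives the conclusion. Otherwise the hypothesis on $\mathcal{F}$ supplies a shedding vertex $v$ of $\Delta$ with $\lk_\Delta(v), \dl_\Delta(v) \in \mathcal{F}$. Since these two complexes live on at most $n-1$ vertices, the inductive hypothesis gives that each is vertex-decomposable, and then Definition \ref{defn:vd}(2) yields that $\Delta$ is vertex-decomposable.

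There is essentially no obstacle here; the lemma is a convenient packaging of the recursive definition, letting one apply structural induction to the whole class $\mathcal{F}$ at once rather than unrolling the recursion inside each proof. The only point worth checking is that the induction measure strictly decreases when passing from $\Delta$ to $\lk_\Delta(v)$ and $\dl_\Delta(v)$, and this is immediate from the fact that $v$ is removed from the vertex set in both constructions, so the recursion must terminate — necessarily at a simplex, by the hypothesis on $\mathcal{F}$.
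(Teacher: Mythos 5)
Your proof is correct: the paper offers no written argument for this lemma (it is stated as ``immediate from the recursive definition''), and your structural induction on the number of vertices is exactly the unpacking of that implicit reasoning, with the vertex count strictly decreasing under both $\lk_\Delta(v)$ and $\dl_\Delta(v)$ and the recursion terminating at simplices. Nothing further is needed.
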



\section{Proof of Main Result} \label{section proof}

In this section we provide the proof of Theorem \ref{maintheorem}. In what follows, we let $\Delta$
be a pure simplicial complex on $n$ vertices
with codimension
$c=n-\dim\Delta-1$. If $F$ is a set of $\dim \Delta + 1$ vertices that is not a face of $\Delta$, we say $F$ is an {\it antifacet}. We write $e(\Delta)$ for the number of facets of
$\Delta$ and $\comult(\Delta)$ for the number of antifacets of $\Delta$. Note that $\comult(\Delta) = \binom{n}{c}-e(\Delta)$.

We start with a technical lemma.
\begin{lemma}
\label{lem:DoubleCounting} If $\dim \Delta \geq 1$, $c\geq 2$ and $\comult(\Delta)\leq2c-1$,
then $\Delta$ has at least two vertices that each avoids at most $2c-3$ antifacets.
\end{lemma}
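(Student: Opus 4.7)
I would count (vertex, antifacet) non-incidences in two ways. Since each antifacet consists of $\dim\Delta+1=n-c$ vertices, it is avoided by exactly $c$ of the $n$ vertices of $\Delta$. Writing $a_v$ for the number of antifacets avoided by a vertex $v$, this yields the identity
$$\sum_{v\in[n]} a_v \;=\; c\cdot \comult(\Delta) \;\leq\; c(2c-1).$$

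Next I would argue by contradiction. Suppose at most one vertex $v$ has $a_v\leq 2c-3$; then at least $n-1$ vertices satisfy $a_v\geq 2c-2$, so combining with $n-1\geq c+1$ gives
$$c(2c-1) \;\geq\; (n-1)(2c-2) \;\geq\; (c+1)(2c-2).$$
Expanding and simplifying yields $2c^2-c \geq 2c^2-2$, i.e., $c\leq 2$. Since the hypothesis $c+2>3$ forces $c\geq 2$, the case $c\geq 3$ already gives an immediate contradiction, and no appeal to purity is needed.

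The main obstacle is the boundary case $c=2$, where the counting bound is tight and purity must be invoked. In this case every inequality above is an equality, forcing $n=4$, $\comult(\Delta)=3$, three vertices $v$ with $a_v=2$, and one remaining vertex $v_0$ with $a_{v_0}=0$. Since $a_{v_0}=0$, the vertex $v_0$ lies in every antifacet; as $n=4$, these antifacets are precisely the three pairs $\{v_0,u\}$ with $u \neq v_0$, so $v_0$ is incident to no edge of the graph $\Delta$. Thus $\{v_0\}$ is a maximal face, hence a $0$-dimensional facet of $\Delta$, while the remaining $\binom{4}{2}-3=3$ two-element subsets of $[4]$ must be $1$-dimensional facets, contradicting the purity of $\Delta$ and completing the proof.
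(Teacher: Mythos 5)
Your proof is correct and follows the same double-counting skeleton as the paper's: count the non-incidences between vertices and antifacets by antifacets to get the upper bound $c\cdot\comult(\Delta)\leq c(2c-1)$, and by vertices to get the lower bound $(n-1)(2c-2)$, which forces $c\leq 2$ and then $n=4$. Where you diverge is in finishing the $c=2$, $n=4$ case: the paper observes that purity forces each vertex to have degree at most two in the antifacet graph and then invokes an ``easily-verified observation'' about graphs on four vertices with at most three edges, while you instead exploit tightness of the double-count to pin down the exact profile ($\comult(\Delta)=3$, three vertices with $a_v=2$, one with $a_{v_0}=0$) and derive a clean contradiction with purity directly, since $a_{v_0}=0$ makes $v_0$ an isolated vertex. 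Both finishes ultimately rest on purity, but yours avoids the unproved graph-theoretic case check and is arguably more self-contained.
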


\begin{proof}
We proceed by double counting on the set
\[
\mathcal{P}=\left\{ (v,F):v\text{ a vertex of }\Delta,F\text{ an antifacet of \ensuremath{\Delta,v\notin F}}\right\} .
\]
Counting by antifacets, we see that \begin{equation}
    \left|\mathcal{P}\right|\leq c\cdot(2c-1) = 2c^2-c.\label{eq:countbyantfac}
\end{equation}  

Suppose for a contradiction that all but at most one vertex avoids more than $2c-3$ antifacets. Then
\begin{equation}
\left|\mathcal{P}\right|\geq(n-1)\cdot(2c-2)\geq(c+1)\cdot(2c-2)=2c^{2}-2.\label{eq:doublecountbyv}
\end{equation}
As $2c^{2}-c\geq2c^{2}-2$ holds only if $c\leq2$, we have reduced
to the case $c=2$. But in this case, the inequalities (\ref{eq:countbyantfac}) and 
(\ref{eq:doublecountbyv}) give us that $2\cdot3\geq(n-1)\cdot2$,
so we are left with the case $n=4$. 

Thus, the proof reduces to the case where $\Delta$ is a one-dimensional complex on $4$ vertices. In this case, $\comult(\Delta)\leq 3$ and $c = 2$, and so we need to show there must be at least two vertices that each avoids at most $2c-3 = 1$ antifacets. Viewing the antifacets as edges in a graph on four vertices, the antifacets give
a graph on four vertices
where every vertex has degree
at most two (since $\Delta$ cannot have an
isolated vertex).
We finish the proof with the following easily-verified observation: For any graph on four vertices with three or fewer edges,
where the degree of each vertex is
$\leq 2$, there are at least two vertices that are each disjoint from at most one edge. \end{proof}

\begin{remark} \label{rem:pure}
Note that if $\Delta$ has the property that all vertices are shedding, then for any $v$ we have that $\del_\Delta(v)$ is pure of the same dimension as $\Delta$. Thus, in this situation Lemma \ref{lem:DoubleCounting} implies that $\Delta$ has at least two vertices satisfying $\overline{e}(\del_\Delta (v)) \leq 2c-3$.
\end{remark}

We make some other easy observations. Recall that a \emph{minimal nonface} of $\Delta$
is a nonface of $\Delta$ that is minimal under inclusion with respect to this property.  
Note that if $\{v,w\}$ is a minimal nonface of $\Delta$ of cardinality two, then $w$ is \emph{not} a vertex of $\lk_\Delta(v)$.
The following lemma says that minimal nonfaces of cardinality two are rare when $\dim \Delta \neq 1$ and the number of antifacets is small enough (equivalently, if the number of facets is large enough relative to $n$).

\begin{lemma}
\label{lem:MissingFaces}If $\comult(\Delta)\leq2c$, then no
nonface of $\Delta$ has fewer than $\dim \Delta$ vertices, and at most
one (minimal) nonface has $\dim\Delta$ vertices.
\end{lemma}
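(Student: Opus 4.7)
The plan is to derive both assertions from the same strategy, namely, to produce enough antifacets from a hypothetical small minimal nonface to contradict $\comult(\Delta) \leq 2c-1$. Throughout, let $d = \dim(\Delta)$, so that $n = c+d+1$ and each facet of $\Delta$ has cardinality $d+1$.

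For the first assertion, I would suppose for contradiction that some minimal nonface $N$ has $|N| = k \leq d-1$. Any $(d+1)$-subset of $[n]$ that contains $N$ contains a nonface and is therefore itself a nonface, i.e., an antifacet. This produces
\[
\comult(\Delta) \;\geq\; \binom{n-k}{d+1-k} \;=\; \binom{c+(d+1-k)}{\,d+1-k\,}
\]
antifacets. As $k$ ranges over $\{0,1,\dots,d-1\}$, the right-hand side is minimized at $k=d-1$, yielding the lower bound $\binom{c+2}{2}$. A direct check of the inequality $(c+2)(c+1) \geq 4c$ (equivalently, $c^2 - c + 2 \geq 0$) shows that $\binom{c+2}{2} \geq 2c$ for every $c \geq 1$, contradicting $\comult(\Delta) \leq 2c-1$.

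For the second assertion, I would let $N_1 \neq N_2$ be two minimal nonfaces each of cardinality $d$. Each $N_i$ has exactly $n - d = c+1$ distinct $(d+1)$-supersets, each of which is an antifacet. Because $|N_1| = |N_2| = d$ but $N_1 \neq N_2$, the union $N_1 \cup N_2$ has at least $d+1$ elements, so at most one $(d+1)$-subset can contain both $N_1$ and $N_2$ (namely $N_1 \cup N_2$, and only when $|N_1 \cup N_2| = d+1$). Inclusion--exclusion then yields
\[
\comult(\Delta) \;\geq\; 2(c+1) - 1 \;=\; 2c+1,
\]
again contradicting the hypothesis.

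The only delicate point will be the inclusion--exclusion step in the second part, where I have to separate the subcases $|N_1 \cup N_2| = d+1$ and $|N_1 \cup N_2| > d+1$ and observe that in both the overlap is at most a single antifacet; once this is in hand, both arguments reduce to elementary binomial counts.
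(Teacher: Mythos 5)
Your proposal is correct and follows essentially the same route as the paper: for the first assertion, you count the $\binom{c+2}{2}$ antifacets containing a size-$(d-1)$ nonface (the paper goes directly to $k=d-1$, while you note the minimum over all $k\le d-1$ occurs there), and for the second you use the identical inclusion--exclusion bound $2(c+1)-1$ coming from two size-$d$ minimal nonfaces with at most one common $(d+1)$-superset.
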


\begin{proof}
A nonface of size $\dim\Delta -1$ is contained in $\binom{c+2}{2}$
antifacets. But $(c+2)(c+1)/2$ is greater than $2c$ for all $c$, so no such nonface can exist.

Each nonface with $\dim\Delta$ vertices is contained in $c+1$
antifacets. If there existed a pair of such nonfaces, they would be contained in at most
one common antifacet. But $2(c+1)-1$ is greater than $2c$, contradicting our assumptions. 
\end{proof}

\begin{corollary}\label{cor.antifacetlinkbound}
Suppose $v$ is a vertex of $\Delta$. If $\comult(\Delta)\leq2c-1$ and
$b$ is the codimension of $\lk_{\Delta}(v)$, then $\comult(\lk_{\Delta}(v))\leq2b-1$.
\end{corollary}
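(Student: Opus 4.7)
The plan is to identify the antifacets of $\lk_\Delta(v)$ with a subfamily of the antifacets of $\Delta$, and then use Lemma~\ref{lem:MissingFaces} to control the arithmetic. I would first introduce
\[
U = \{u \in [n] \setminus \{v\} : \{v, u\} \notin \Delta\}, \qquad k = |U|,
\]
so that the vertex set of $\lk_\Delta(v)$ equals $[n] \setminus (U \cup \{v\})$, of size $n' = n - 1 - k$; since $\dim \lk_\Delta(v) = \dim(\Delta) - 1$, this yields $b = n' - \dim(\Delta) = c - k$. The assignment $G \mapsto G \cup \{v\}$ then provides a bijection between antifacets of $\lk_\Delta(v)$ and antifacets of $\Delta$ that contain $v$ and are disjoint from $U$.

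Next I would invoke Lemma~\ref{lem:MissingFaces} to constrain $k$. Since $\comult(\Delta) \leq 2c-1$, that lemma forces every minimal nonface of $\Delta$ to have at least $\dim(\Delta)$ vertices, with at most one of size exactly $\dim(\Delta)$. Hence if $\dim(\Delta) \geq 3$ then $k = 0$, while if $\dim(\Delta) = 2$ then $k \leq 1$. When $k = 0$ one has $b = c$, and the bijection above combined with $\comult(\Delta) \leq 2c - 1$ gives the desired bound at once. When $k = 1$ and $\dim(\Delta) = 2$, writing $U = \{u\}$, the minimal nonface $\{v,u\}$ generates $n - 2 = c + 1$ antifacets of $\Delta$ containing both $v$ and $u$ (one per choice of third vertex), none of which contribute to $\comult(\lk_\Delta(v))$; subtracting from $\comult(\Delta) \leq 2c-1$ yields $\comult(\lk_\Delta(v)) \leq c - 2 \leq 2c - 3 = 2b - 1$, using $c \geq 1$.

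The main obstacle is the boundary case $\dim(\Delta) = 1$, where Lemma~\ref{lem:MissingFaces} gives no control over size-$2$ nonfaces and $\lk_\Delta(v)$ is $0$-dimensional with $\comult(\lk_\Delta(v)) = 0$; here the desired inequality reduces to $b \geq 1$, which need not follow from $\comult(\Delta) \leq 2c - 1$ alone (for instance, the path $1$--$2$--$3$ saturates the hypothesis but has a vertex with only one neighbor). Since $0$-dimensional complexes are automatically vertex decomposable by Lemma~\ref{vdprop}(1), the natural workaround is to treat the $1$-dimensional case of Theorem~\ref{maintheorem} separately via Lemma~\ref{vdprop}(2), so that the corollary is only invoked when $\dim \lk_\Delta(v) \geq 1$.
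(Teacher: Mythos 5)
Your argument is essentially the paper's own proof: you identify the antifacets of $\lk_\Delta(v)$ with antifacets of $\Delta$ containing $v$ and disjoint from $U$ via $G \mapsto G \cup \{v\}$, use Lemma~\ref{lem:MissingFaces} to bound the number $k$ of non-neighbors of $v$ (so $b = c-k$), and then split into the cases $k=0$ and $k=1$ exactly as in the paper's dichotomy $\dim(\Delta) > 2$ versus $\dim(\Delta) = 2$, with the same counting of the $n-2 = c+1$ antifacets through the missing edge. You also correctly flag something the paper gets wrong: the paper's proof dismisses $\dim(\Delta) \le 1$ as trivial because $\comult(\lk_\Delta(v)) = 0$, but this establishes the inequality only when $2b - 1 \ge 0$; as your path example (and any pure $1$-dimensional complex with a degree-one vertex) shows, one can have $\comult(\Delta) \le 2c-1$ with $b = 0$, so the corollary as stated actually fails for $\dim(\Delta) = 1$. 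The fix you propose — restricting the corollary to $\dim(\Delta) \ge 2$ — is exactly what the main theorem's proof implicitly relies on, since it handles $\dim(\Delta) \le 1$ separately before invoking the corollary.
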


\begin{proof}
Note that the statement is trivial if $\dim\Delta \leq 1$ since in this case $\comult(\lk_{\Delta}(v)) = 0$.
Now since $\Delta$ is pure, we have that $\dim\lk_{\Delta}(v)=\dim\Delta-1$. 
If $\dim\Delta > 2$, then by Lemma \ref{lem:MissingFaces} we have that $\Delta$ has no nonface of size two, and hence $\lk_{\Delta}(v)$ has $n-1$ vertices by the comment
after Remark \ref{rem:pure}. From this we get that $b = (n-1) - (\dim\Delta-1) - 1 =c$. The result then follows
by noting that if $F$ is an antifacet of $\lk_{\Delta}(v)$, then $F\cup\{v\}$
is an antifacet of $\Delta$. 

If $\dim\Delta = 2$, then there are two cases.  If $\Delta$
has no nonface of size two, then the argument follows as above.
Otherwise, $\Delta$ has a nonface of size two, and 
then again by Lemma\ \ref{lem:MissingFaces} we have that $b=c-1$ (and hence also $c\geq1$), and there is a unique minimal
nonface $\{v,w\}$ having two vertices. 
Now, as before there are
$\comult(\lk_{\Delta}(v))$ antifacets of $\Delta$ containing $v$
but not $w$, and an additional $n-2$ antifacets of $\Delta$ containing
$\{v,w\}$. 
We obtain that $\comult(\lk_{\Delta}(v))\leq\comult(\Delta)-(n-2) \leq (2c-1) - (c+1) = 2b - c$,
which yields the desired result. 
\end{proof}

We restate the definition of a shedding vertex in the language of
antifacets.
\begin{lemma}
\label{lem:AntifacetShedding}A vertex $w$ of $\Delta$ is a shedding
vertex unless $w$ is contained in some facet $F$ with the property that, for each
vertex $u\notin F$, the set $(F\setminus\{w\})\cup\{u\}$ is an antifacet.
\end{lemma}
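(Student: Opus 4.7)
The plan is to prove the lemma by a direct translation of the definition of a shedding vertex into the antifacet language. I would unwind both definitions and verify that, after a small cardinality check, they match.

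Recall from Section 2 that a vertex $w$ fails to be a shedding vertex precisely when some facet $F$ with $w \in F$ has the property that for every vertex $y \notin F$, the set $(F \setminus \{w\}) \cup \{y\}$ is not a face of $\Delta$. To reconcile this with the statement of the lemma, I would invoke the standing purity hypothesis of Section \ref{section proof}: every facet of $\Delta$ has cardinality $\dim(\Delta) + 1$, and since $u \notin F$ implies $u \notin F \setminus \{w\}$, the swapped set $(F \setminus \{w\}) \cup \{u\}$ also has exactly $\dim(\Delta) + 1$ vertices. Because an antifacet is by definition a set of this cardinality that fails to be a face, the two conditions ``$(F \setminus \{w\}) \cup \{u\}$ is not a face'' and ``$(F \setminus \{w\}) \cup \{u\}$ is an antifacet'' are logically identical for such swaps.

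Combining these two observations yields both directions of the ``unless'' in the statement. The only step requiring any care is the cardinality bookkeeping that identifies antifacets with non-faces of size $\dim(\Delta) + 1$; no genuine combinatorial or topological input beyond the definitions is needed, and I do not anticipate a substantive obstacle. The write-up should therefore amount to a short paragraph of definition chasing.
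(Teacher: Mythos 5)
Your proposal is correct and matches the paper's intent exactly: the paper offers no written proof for this lemma, introducing it simply as a restatement of the shedding-vertex definition in antifacet language, which is precisely the definition-chasing you carry out. Your cardinality check (using purity and $u \notin F$, hence $u \neq w$, to see that $(F\setminus\{w\})\cup\{u\}$ has size $\dim(\Delta)+1$, so ``not a face'' and ``antifacet'' coincide) is the only substantive step and you handle it correctly.
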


With this we have the following observation.
\begin{lemma}
\label{lem:NonsheddingGivesShedding}If $\comult(\Delta)\leq2c-1$,
then:
\begin{enumerate}
\item Any vertex that is contained in at least $c$ antifacets is a shedding
vertex.
\item If $w$ is not a shedding vertex, and $F$ is a facet as in Lemma~\ref{lem:AntifacetShedding},
then any vertex $v\in F\setminus\{w\}$ is contained in $c$ antifacets,
and in particular is a shedding vertex.
\end{enumerate}
\end{lemma}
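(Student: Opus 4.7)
The plan is to derive both parts from the characterization of non-shedding vertices given in Lemma~\ref{lem:AntifacetShedding}, combined with a direct antifacet count that exploits the hypothesis $\comult(\Delta) \leq 2c-1$.

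For part (1), I would argue by contrapositive. Suppose $v$ is a vertex contained in at least $c$ antifacets that fails to be shedding. By Lemma~\ref{lem:AntifacetShedding}, there exists a facet $F \ni v$ such that $(F \setminus \{v\}) \cup \{u\}$ is an antifacet for every $u \in [n] \setminus F$. Because $|F| = \dim(\Delta) + 1 = n - c$, the complement $[n] \setminus F$ has exactly $c$ elements, so this construction produces $c$ distinct antifacets (each is uniquely determined by its single element outside $F$). None of these $c$ antifacets contains $v$, since $v \in F$ while $u \neq v$. Combined with the $c$ antifacets containing $v$ guaranteed by hypothesis, this forces $\comult(\Delta) \geq 2c$, contradicting our assumption.

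For part (2), the argument reduces to part (1) after one line of counting. Let $v \in F \setminus \{w\}$, where $F$ is the facet provided by Lemma~\ref{lem:AntifacetShedding} witnessing that $w$ is not shedding. By the defining property of $F$, each of the $c$ sets of the form $(F \setminus \{w\}) \cup \{u\}$ with $u \in [n] \setminus F$ is an antifacet, and since $v \in F \setminus \{w\}$, each of these $c$ antifacets contains $v$. Thus $v$ lies in at least $c$ antifacets, and part (1) then yields that $v$ is shedding.

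I do not anticipate a genuine obstacle: once the non-shedding condition is translated into the antifacet language via Lemma~\ref{lem:AntifacetShedding}, both statements are essentially immediate counting. The only sanity checks to perform are that the $c$ antifacets produced in part (1) are mutually distinct and disjoint from the family of antifacets containing $v$, both of which are transparent from the construction.
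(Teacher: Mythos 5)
Your proof is correct and follows essentially the same path as the paper: both establish part (1) by observing that a non-shedding vertex $w$ gives rise to $c$ antifacets of the form $(F\setminus\{w\})\cup\{u\}$ with $u\notin F$, none containing $w$, leaving at most $c-1$ antifacets that do; and both derive part (2) by noting that each such antifacet contains every $v\in F\setminus\{w\}$, so part (1) applies. The only stylistic difference is that you phrase part (1) as a contradiction from the hypothesis of $\geq c$ antifacets through $v$, while the paper states the contrapositive directly (at most $c-1$ antifacets contain a non-shedding vertex), but the counting is identical.
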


\begin{proof}
If $w$ is not a shedding vertex, then $w$ is contained in some facet
$F$ so that $(F\setminus\{w\})\cup\{u\}$ is an antifacet for each
$u\notin F$. In particular, there are $c$ antifacets that do not
contain $w$, hence at most $c-1$ that do contain $w$. 

Moreover, since the set $F\setminus\{w\}$ is contained in $c$ antifacets,
the same holds for every $v\in F\setminus\{w\}$.
\end{proof}

We are now ready to prove our main result.

\begin{proof}[Proof of Theorem \ref{maintheorem}]
Recall that $\Delta$ is assumed to be a pure  simplicial complex on vertex set $[n]$ with codimension $c$, with at least $\binom{n}{c}-2c +1$ facets, so that $\comult(\Delta)\leq2c-1$. 
The statement can be checked
directly for all simplicial complexes on $n \leq 3$
vertices, so we suppose that $n > 3$. It is convenient to handle the cases when $c=0,1,n-2,n-1$ separately.
If $c=0$ or $1$, then $\Delta$ is a simplex, or a simplex boundary
with at most one face removed, and the result follows by Lemma \ref{lem:removeone}.
On the other hand, if $c=n-1$, then $\dim\Delta=0$, and the result
is immediate.

Moving forward, we can assume that
$c \leq n-2$ (or equivalently, that $\dim \Delta \geq 1$) and that $c \geq 2$.
 To prove the theorem we wish to find a shedding vertex $v$ so that $\lk_{\Delta}(v)$ and $\dl_{\Delta}(v)$ also satisfy the hypotheses of the theorem. Note that
we are using Lemma \ref{lem_vd}.
First observe that for any vertex $v$,  the complex ${\rm link}_\Delta(v)$ is pure of dimension $\dim \Delta - 1$, and also satisfies the hypotheses in Corollary \ref{cor.antifacetlinkbound}.

We next turn to the deletion. For this note that if $v$ is a shedding vertex, then by definition  $\dim\dl_{\Delta}(v)=\dim\Delta$. Since the number of vertices of $\dl_{\Delta}(v)$ is $n-1$, we have that the codimension of $\dl_{\Delta}(v)$ is $c-1$. Hence to complete the proof we need to find a shedding vertex $v$ satisfying  $\comult(\dl_{\Delta}(v)) \leq 2c-3$.

If every (or even all but one) vertex of $\Delta$ is a shedding
vertex, then we are done by Lemma~\ref{lem:DoubleCounting}. Otherwise,
Lemma~\ref{lem:NonsheddingGivesShedding} yields a shedding vertex
$v$ that is contained in at least $c$ antifacets, so that $\comult(\dl_{\Delta}(v))\leq2c-1-c$.
In either case, we have produced the desired shedding vertex and the result follows.
\end{proof}

\section{Consequences and
Concluding Remarks} \label{section conclusion}

We end with some additional comments 
and examples related to 
our main result.  We first give 
an example to show that the bound in Theorem
\ref{maintheorem} is optimal.

\begin{example}\label{example tight}

For any $2 \leq c \leq n-2$, we construct a pure simplicial complex $\Delta$ on $n$ vertices that has codimension $c$ 
and $\binom{n}{c}-2c$ facets, and that is 
not Cohen--Macaulay. Since any vertex decomposable complex is also Cohen--Macaulay, this
shows that the bound in Theorem \ref{maintheorem} is optimal.

We start with the complete $(n - c - 1)$-skeleton of the simplex on $n$ vertices, and obtain 
$\Delta$
by removing the facets containing the following vertices: all of 
$\{1, \ldots, n-c-2\}$ (this set is $\{1\}$ if $n-c-2 = 1$ and is empty if $n - c - 2 = 0$), 
exactly one of $\{n - c - 1, n - c\}$, and exactly one of $\{n - c + 1, \ldots, n\}$. Then
we have removed exactly $2c$ facets, and ${\rm link}_\Delta(\{1, \ldots,n - c - 2\})$ is $1-$dimensional and disconnected, meaning $\Delta$ is not Cohen--Macaulay, by Reisner's Criterion. Since every vertex decomposable complex is Cohen-Macaulay, $\Delta$ is not vertex decomposable.

\end{example}

\begin{remark}\label{dim1case}
In the situation where $\dim\Delta = 1$, so that $c = n-2$, we may view $\Delta$ as a graph on vertex set $[n]$ and our main result has a graph theoretic interpretation.  Since $\Delta$ is assumed to be pure, we consider graphs with no isolated vertex. In this context, Example \ref{example tight} yields a complex $\Delta$ that is the union of a complete graph with an isolated edge, and the link being considered is ${\rm link}_\Delta(\emptyset) = \Delta$. Indeed,
from Lemma\ \ref{vdprop} we know that a pure $1$-dimensional complex is vertex decomposable if and only if it is connected. Hence in this case 
Theorem \ref{maintheorem} is equivalent to the following graph theoretic statement:  an edge-cut of the complete graph $K_n$ into nontrivial parts has size at least $2c = 2n - 4$.  We refer to \cite{Diestel:2017} for definitions from graph theory.
\end{remark}

\begin{remark} \label{remark matroid}
One can also ask whether the conclusion of Theorem \ref{maintheorem} can be strengthened, that is whether there is some property {\it stronger} than vertex decomposable that is implied by $\Delta$ having ``enough facets''. Two well-studied classes of vertex decomposable simplicial complexes are independence complexes of matroids and shifted complexes.

Recall that a pure simplicial complex $\Delta$ is the independence complex of a {\it matroid} if it satisfies the following exchange property: if $F$ and $G$ are facets of $\Delta$ such that $v \in F \backslash G$, then there exists a $w \in G \backslash F$ such that $(F \backslash \{v\}) \cup \{w\}$ is again a facet of $\Delta$. A pure simplicial complex $\Delta$ is {\it shifted} if there exists a (re)labeling of the vertex set $V = [n] = \{1, \dots, n\}$ such that whenever $\{v_1, v_2, \dots, v_k\}$ is a face of $\Delta$, replacing any $v_i$ by a vertex with a smaller label again results in a face of $\Delta$.

We show that neither of these properties are implied by the assumptions in Theorem \ref{maintheorem}.  As an example, consider the complete graph $K_4$ as a $1$-dimensional simplex on $n = 4$ vertices (so that $c=2$).  Theorem \ref{maintheorem} implies that if we remove 3 or fewer facets (edges) and are left with no isolated vertices, we end up with a vertex decomposable complex. It turns out that we can remove just two facets from $K_4$ to destroy these stronger properties.

On the one hand, we can delete the facets $\{1,2\}$ and $\{1,3\}$, resulting in a complex that is easily seen not to be a matroid complex.
On the other hand, if we start with $K_4$ and delete the facets $\{1,2\}$ and $\{3,4\}$ we are left with a 4-cycle, which is not shifted. Indeed the class of shifted $1$-dimensional complexes coincide with the class of {\em threshold} graphs \cite{Klivans:2007}, which are known to be chordal \cite{Mahadev/Peled:1995}.
\end{remark}

We can also recover 
a result similar to
\cite[Theorem 2.1]{Terai/Yoshida:2006}. 
In particular, if we assume
$\Delta$ has ``even more'' facets,
then $\Delta$ is automatically pure, so
it must be vertex decomposable.

\begin{lemma}\label{morefacets}
    Let $\Delta$ be a simplicial complex on vertex set $[n]$
of codimension $c$.  Suppose $\Delta$
has at least $\binom{n}{c}-c$ facets 
of dimension $\dim\Delta = n-c-1$.  Then $\Delta$
is pure.
\end{lemma}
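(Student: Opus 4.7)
The plan is to argue by contradiction, using a straightforward counting bound on antifacets forced by a hypothetical lower-dimensional facet. Let $d = \dim(\Delta) = n - c - 1$, so top-dimensional faces of $\Delta$ have exactly $d+1 = n-c$ vertices. Note first that every face of dimension $d$ is automatically a facet (it is maximal in $\Delta$ by dimension), so ``facets of dimension $d$'' is the same as ``$(d+1)$-element faces.'' Since the total number of $(d+1)$-subsets of $[n]$ is $\binom{n}{d+1} = \binom{n}{c}$, the hypothesis becomes
\[
\comult(\Delta) \;\leq\; c.
\]
We may also assume $c \geq 1$, since $c=0$ forces $\Delta$ to be the full simplex.

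Now suppose for contradiction that $\Delta$ is not pure. Then there is a facet $F'$ of $\Delta$ with $|F'| = k < d+1 = n-c$. Since $F'$ is maximal, every $(d+1)$-subset of $[n]$ that properly contains $F'$ must be an antifacet. The number of such extensions is
\[
\binom{n-k}{d+1-k} \;=\; \binom{c+j}{j}, \qquad \text{where } j = d+1-k \geq 1.
\]
For $c\geq 1$ and $j \geq 1$, we have $\binom{c+j}{j} \geq \binom{c+1}{1} = c+1$. This gives at least $c+1$ antifacets of $\Delta$, contradicting $\comult(\Delta) \leq c$. Therefore no such $F'$ can exist, and $\Delta$ is pure.

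There is no real obstacle here beyond setting up the bookkeeping: the entire argument is a single double-counting observation, namely that a low-dimensional facet forces too many extensions of it to be missing from the top layer. The only subtlety is the identification of ``faces of top dimension'' with ``facets of top dimension,'' which lets us translate the hypothesis into the clean inequality $\comult(\Delta) \leq c$.
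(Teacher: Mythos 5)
Your proof is correct and takes essentially the same approach as the paper: argue by contradiction that a facet of dimension below $\dim(\Delta)$ forces more than $c$ antifacets. The only cosmetic difference is that you count all $(d+1)$-element supersets of $F'$ at once via $\binom{c+j}{j}\geq c+1$, whereas the paper first extends $F'$ to an intermediate set of cardinality $\dim(\Delta)$ and then counts its $c+1$ one-element extensions.
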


\begin{proof}
If $\Delta$ has a facet $F$ of dimension smaller than $\dim \Delta$, then $F$ can be extended to a set $S$ containing $\dim \Delta$ vertices. Namely, if $G$ is a facet of dimension $\dim\Delta$, then $F \not \subset G$, giving $|G \backslash F| \geq \dim \Delta - \dim F + 1$. We obtain the set $S$ by adding $\dim\Delta - \dim F$ vertices from $G \backslash F$ to $F$. Now, $S$ is contained in $c+1$ sets of size $n-c$, none of which are facets of $\Delta$ (as $S$ properly contains the facet $F$). This is a contradiction to the assumption that there are at least $\binom{n}{c}-c$ facets of dimension $\dim\Delta = n-c-1$.  
\end{proof}

We now derive a strengthening of 
\cite[Theorem 2.1]{Terai/Yoshida:2006}:

\begin{corollary} \label{corollary even more}
Let $\Delta$ be a simplicial complex on vertex set $[n]$
of codimension $c$.  If $\Delta$
has at least $\binom{n}{c}-c$ facets 
of dimension $\dim\Delta = n-c-1$, then $\Delta$
is vertex decomposable.
\end{corollary}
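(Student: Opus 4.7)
The plan is to combine Lemma~\ref{morefacets} with Theorem~\ref{maintheorem} directly. The hypothesis of Corollary~\ref{corollary even more} gives us at least $\binom{n}{c}-c$ facets of the top dimension $n-c-1$, which is exactly the bound appearing in Lemma~\ref{morefacets}. So the first step is to invoke Lemma~\ref{morefacets} to conclude that $\Delta$ is pure.

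Once purity is established, I would compare the facet count bound $\binom{n}{c}-c$ against the bound $\binom{n}{c}-2c+1$ required by Theorem~\ref{maintheorem}. A one-line arithmetic check shows that $\binom{n}{c}-c \geq \binom{n}{c}-2c+1$ whenever $c \geq 1$. Thus, for $c\geq 1$, the pure complex $\Delta$ satisfies the hypotheses of Theorem~\ref{maintheorem}, and we conclude that $\Delta$ is vertex decomposable.

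The only remaining case is $c=0$, in which case $\Delta$ is forced to be the full simplex on $[n]$ (as it has codimension $0$), and hence is vertex decomposable by part (1) of Definition~\ref{defn:vd}. Putting these pieces together yields the corollary.

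There is no real obstacle here: the work has already been done in Lemma~\ref{morefacets} (handling purity) and Theorem~\ref{maintheorem} (handling vertex decomposability under the pure hypothesis). The corollary is essentially a bookkeeping statement that combines the two, with the only subtlety being to verify the trivial inequality $-c \geq -2c+1$ for $c\geq 1$ and to dispose of the edge case $c=0$ separately.
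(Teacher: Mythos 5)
Your proposal follows exactly the same approach as the paper: first apply Lemma~\ref{morefacets} to deduce purity, then observe that $\binom{n}{c}-c \geq \binom{n}{c}-2c+1$, and conclude by Theorem~\ref{maintheorem}. The only difference is that you explicitly note the inequality $-c\geq -2c+1$ requires $c\geq 1$ and dispose of $c=0$ separately (where $\Delta$ must be the full simplex), a small edge case the paper's one-line proof glosses over; this is a correct and slightly more careful version of the same argument.
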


\begin{proof}
    If $c=0$, then $\Delta$ is a 
    simplex and the
    result follows immediately.
    So, suppose that $c \geq 1$.
    By Lemma \ref{morefacets},
    $\Delta$ is a pure simplicial
    complex of codimension $c$
    with at least $\binom{n}{c}-c \geq 
    \binom{n}{c}-2c+1$ facets.  
    Now apply
    Theorem \ref{maintheorem}.
\end{proof}

 Terai and Yoshida showed that Stanley-Reisner rings having large multiplicities are Cohen-Macaulay \cite[Theorem 3.1]{Terai/Yoshida:2006}, by proving that the dual ideal of
 the Stanley-Reisner ideal of $\Delta$
 has a linear resolution \cite[Theorem 3.3]{Terai/Yoshida:2006}. We can also recover this dual version as a direct consequence of our main result by employing Stanley-Reisner
theory and properties of Alexander
duality, 
and in particular Eagon and Reiner's classification of square-free monomial ideals with a
linear resolution. The following
corollary may be of
interest for those researchers
working with monomial ideals. We assume that 
the reader
is familiar with the Stanley-Reisner
correspondence; for 
undefined
terminology, see \cite{Miller/Sturmfels:2005}. 
The following result 
originally appeared as 
\cite[Theorem 3.3]{Terai/Yoshida:2006}.

\begin{corollary} \label{corollary linear}
Let $I$ be a square-free monomial ideal
of $R = \mathbb{K}[x_1,\ldots,x_n]$ with 
$\mathbb{K}$ an arbitary field, and suppose
that every generator has degree $c$.
If $I$ has at least $\binom{n}{c}-2c+1$ generators, then $I$ has a linear resolution.
\end{corollary}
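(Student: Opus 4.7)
The plan is to translate the linear-resolution statement into a Cohen-Macaulay statement about a combinatorial object, and then invoke Theorem \ref{maintheorem}. The bridge is the Eagon-Reiner theorem: a squarefree monomial ideal $I_\Gamma$ has a linear resolution if and only if the Stanley-Reisner ring of the Alexander dual $\Gamma^\vee$ is Cohen-Macaulay. So my goal is to exhibit a pure complex $\Delta$ of codimension $c$ such that $I = I_{\Delta^\vee}$ and such that the generator-count hypothesis on $I$ becomes precisely the facet-count hypothesis of Theorem \ref{maintheorem} for $\Delta$.

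I would construct $\Delta$ directly from $I$ as follows. Let $m_1, \ldots, m_s$ be the minimal generators of $I$, where by hypothesis $|\supp(m_i)| = c$ for every $i$ and $s \geq \binom{n}{c} - 2c + 1$. Set $F_i := [n] \setminus \supp(m_i)$, each of size $n - c$, and let $\Delta$ be the simplicial complex on $[n]$ whose facets are $F_1, \ldots, F_s$. Since the $F_i$ all have the same cardinality they form an antichain, so $\Delta$ is a well-defined pure complex of codimension $c$ with exactly $s$ facets. Theorem \ref{maintheorem} then applies and gives that $\Delta$ is vertex decomposable, and in particular Cohen-Macaulay.

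It remains to verify the identification $I = I_{\Delta^\vee}$. From the definition of the Alexander dual, a set $G$ is a minimal nonface of $\Delta^\vee$ exactly when $[n] \setminus G$ is a facet of $\Delta$, so the minimal nonfaces of $\Delta^\vee$ are precisely $\supp(m_1), \ldots, \supp(m_s)$. Thus $I = I_{\Delta^\vee}$, and Eagon-Reiner combined with the Cohen-Macaulayness of $\Delta = (\Delta^\vee)^\vee$ delivers the conclusion. The only subtlety is the Alexander-duality bookkeeping, namely keeping track that Eagon-Reiner is being invoked in the correct direction (relating linearity of $I_{\Delta^\vee}$ to Cohen-Macaulayness of $\Delta$, not of $\Delta^\vee$); once the dictionary is set up, Theorem \ref{maintheorem} carries the substantive content of the proof.
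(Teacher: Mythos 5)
Your argument is essentially identical to the paper's: you build $\Delta$ from the complements $F_i = [n]\setminus\supp(m_i)$, apply Theorem \ref{maintheorem} to get that $\Delta$ is vertex decomposable (hence Cohen--Macaulay), and then invoke the Eagon--Reiner theorem via Alexander duality to conclude that $I$ has a linear resolution. The only cosmetic difference is that you spell out the verification that $I = I_{\Delta^\vee}$ in terms of minimal nonfaces, where the paper simply asserts that $I$ is the Alexander dual of $I_\Delta$; the substance is the same.
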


\begin{proof}
    Given a square-free monomial
    $m = x_{i_1}\cdots x_{i_c}$ of
    degree $c$, set $F_m = [n] \setminus
    \{i_1,\ldots,i_c\}$.
    So, if $I = \langle m_1,\ldots,m_t \rangle
    $ is a square-free monomial ideal
     where each generator has degree $c$,
     then $\Delta = \langle F_{m_1},\ldots,
     F_{m_t} \rangle$ is a pure 
     simplicial complex of codimension $c$
     with at least $\binom{n}{c}-2c+1$ facets.
     By Theorem \ref{maintheorem},
     $\Delta$ is a vertex decomposable 
     simplicial complex, and consequently,
     $\Delta$ is Cohen-Macaulay.

     The ideal $I$ is the Alexander
     dual of $I_\Delta$, the Stanley-Reisner
     ideal of $\Delta$.  Since $\Delta$
     is Cohen-Macaulay, the Eagon-Reiner
     Theorem \cite{Eagon/Reiner:1998} implies
     that $I$ has a linear resolution.
\end{proof}

Our results also have applications to the study of {\it Simon's Conjecture} \cite{Simon}, a statement involving the structure of shellable complexes. To explain this connection we first review some relevant concepts.

\begin{definition}
 A pure $d$-dimensional simplicial complex $\Delta$ is {\it shellable} if there exists an ordering of its facets $F_1, F_2, \dots, F_s$ such that for all $k = 2,3, \dots, s$ the simplicial complex
\[\left ( \bigcup_{i=1}^{k-1} \langle F_i \rangle \right ) \cap \langle F_ k \rangle\]
\noindent
is pure of dimension $d-1$.  
\end{definition}
By convention the void complex $\emptyset$ and the empty complex $\{\emptyset\}$ are both shellable.  Note that any pure $d$-dimensional complex $\Delta$ on vertex set $[n]$ is a subcomplex of $\Delta_n^{(d)}$, the $d$-dimensional skeleton of the simplex on $[n]$. It is known that $\Delta_n^{(d)}$ is shellable, and one can ask if it is possible to produce a shelling of $\Delta_n^{(d)}$ that begins with a shelling of $\Delta$.

\begin{definition}
A pure $d$-dimensional simplicial complex $\Delta$ on vertex set $[n]$ is {\it shelling completable} if there exists a shelling of $\Delta$ that is the initial sequence of a shelling of $\Delta_n^{(d)}$.
\end{definition}

With these notions, we can formulate Simon's Conjecture as follows.

\begin{conjecture}\label{conjecture Simon}
Every shellable complex is shelling completable.
\end{conjecture}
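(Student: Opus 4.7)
The plan is to use Theorem~\ref{maintheorem} to cut down the search space for a potential counterexample. Combined with the result of Coleman--Dochtermann--Geist--Oh that vertex decomposable complexes are shelling completable, our main theorem implies that any shellable but non-shelling-completable complex $\Delta$ of codimension $c$ on $[n]$ must satisfy $\comult(\Delta) \geq 2c$. So from the outset we may assume $\Delta$ is shellable, pure of dimension $d = n - c - 1$, and has relatively few facets. This is the only concrete foothold the paper provides, and I would start here.

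Given a shelling $F_1, \dots, F_s$ of $\Delta$, my inductive strategy would be to produce a single facet $G \in \Delta_n^{(d)} \setminus \Delta$ that can be appended as $F_{s+1}$, and then recurse on $\Delta \cup \langle G \rangle$, which has one fewer antifacet and still lies in the shellable class. The required condition is that $G \cap (F_1 \cup \dots \cup F_s)$ is pure of dimension $d - 1$. I would try to produce such a $G$ using an antifacet-counting argument in the spirit of Section~\ref{section proof}: among the antifacets of $\Delta$, seek one that shares a codimension-one face with some earlier $F_i$ but is otherwise ``minimally obstructive,'' and show by double counting that such a choice exists whenever the antifacet count is bounded.

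The main obstacle is that shellings are not canonical: the given shelling $F_1, \dots, F_s$ may fail to extend even when some other shelling of $\Delta$ does. A reasonable workaround is to sidestep extension entirely and instead show that $\Delta$ admits a shelling compatible with a prescribed global order on the facets of $\Delta_n^{(d)}$ (for example, a lexicographic or line shelling). This converts ``extend the given shelling'' into ``choose the right shelling to begin with,'' which may be more amenable to induction on the antifacet set and to shedding-vertex techniques inherited from Definition~\ref{defn:vd}.

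I should be candid that Simon's Conjecture is widely regarded as a genuinely difficult open problem, and the strategy above is not expected to settle it in full. A realistic intermediate target suggested by our framework is the case $c = 2$, where Remark~\ref{dim1case} reformulates everything in the graph-theoretic language of edge-cuts of $K_n$, together with small codimension in general; the combinatorial control provided by the hypothesis $\comult(\Delta) \leq 2c - 1$ is most effective in this regime, and a positive answer there would already be a meaningful corollary of the reduction above.
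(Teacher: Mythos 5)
The statement you were asked to prove is Simon's Conjecture, which is an open problem; the paper does not prove it, and neither do you. You correctly recognize this, and your candor is appropriate. What the paper actually does with Conjecture~\ref{conjecture Simon} is the observation you open with: combining Theorem~\ref{maintheorem} with the result of Coleman--Dochtermann--Geist--Oh that pure vertex decomposable complexes are shelling completable, any counterexample $\Delta$ of codimension $c$ must have $\comult(\Delta) \geq 2c$, i.e., at most $\binom{n}{c} - 2c$ facets. That reduction is correct and matches the paper exactly. Everything after it in your proposal is a speculative research program, not a proof.

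Two concrete issues with the sketch itself. First, the one-facet-at-a-time extension strategy runs directly into the obstacle you yourself name: the given shelling order of $\Delta$ need not extend, and you would have to argue that \emph{some} shelling extends, which is not obviously amenable to the antifacet-counting machinery of Section~\ref{section proof}. That machinery is tailored to finding shedding vertices, not to producing a facet $G \notin \Delta$ whose intersection with $\Delta$ is pure of dimension $d-1$; moreover, after adjoining $G$ the complex need not remain shellable in a way you control, so the proposed recursion on $\comult$ is not well-founded without additional input. Second, your suggested base case is misidentified: you say the case $c = 2$ is where Remark~\ref{dim1case} applies, but Remark~\ref{dim1case} concerns $\dim(\Delta) = 1$, which corresponds to $c = n - 2$, not $c = 2$. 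These coincide only when $n = 4$. For general $n$, codimension $2$ is the high-dimensional regime where the graph-theoretic reformulation does not apply, so the proposed ``realistic intermediate target'' is misdirected. In short, the paper's contribution here is a constraint on counterexamples, not a proof, and your proposal does not close that gap.
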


In \cite{Coleman-Dochtermann-Geist-Oh} it is shown that every pure vertex decomposable complex is shelling completable.  Hence our Theorem \ref{maintheorem} implies that any counterexample to Simon's Conjecture (a pure shellable complex that is not shelling completable) must have ``not too many facets''. In particular if $\Delta$ has codimension $c$, then any counterexample
can have at most $\binom{n}{c}-2c$ facets.

There are other ways to detect the vertex decomposability of a simplicial complex $\Delta$ in terms of combinatorial data.
 In \cite{Lason:2013} Laso\'{n} provides a sufficient condition for vertex decomposability in terms of the $f$-vector of $\Delta$ (more specifically the number of facets and {\it ridges}). To describe these results, suppose $\Delta$ is a pure simplicial complex on vertex set $[n]$, with dimension 
$\dim\Delta= d $ and codimension $\cod \Delta = c =  n-d-1$. Recall that the {\it $f$-vector}
of $\Delta$ is $f(\Delta) = (f_0,f_1,\ldots,f_d)$ where $f_i$ is the number of faces of
$\Delta$ of dimension $i$.  Theorem \ref{maintheorem} implies that if 
$f_{d} \geq \binom{n}{c}-2c+1$, then
$\Delta$ is vertex decomposable.   

We say that a simplicial complex $\Delta$ is {\it extremal}
if $\Delta$ is pure of dimension $t$ with $r$
facets, and it has the minimum
number of faces of dimension $(t-1)$ 
among all simplicial complexes with $r$ faces of dimension $t$.  
As shown by \cite[Corollary 1]{Lason:2013}, an extremal simplicial complex can be
determined from its $f$-vector.  The main result from \cite{Lason:2013} is the following.

\begin{theorem}\cite[Theorem 5]{Lason:2013}
An extremal simplicial complex is vertex decomposable.
\end{theorem}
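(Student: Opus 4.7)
The plan is to prove this by induction on the number of facets $r$, using the structural classification of extremal complexes furnished by \cite[Corollary 1]{Lason:2013}. That classification says that extremality is detectable from the $f$-vector alone, which reflects a very rigid structure: extremal complexes are (up to relabeling of vertices) colex-initial segments of the family of $(t+1)$-subsets of $[n]$, and in particular they can be assumed to be shifted in the sense of Remark~\ref{remark matroid}. Since vertex decomposability is invariant under relabeling of the vertex set, we may work with shifted representatives throughout.

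For the base cases: a complex with a single facet is a simplex, hence vertex decomposable by Definition~\ref{defn:vd}, and any $0$-dimensional complex is vertex decomposable by Lemma~\ref{vdprop}(1). For the inductive step, assume $\Delta$ is extremal and shifted of dimension $t \geq 1$ with $r \geq 2$ facets on vertex set $[n]$. I would argue that the maximal vertex $n$ serves as a shedding vertex: for any facet $F$ with $n \in F$ and any vertex $y \notin F$, necessarily $y < n$, and the shifted property then gives that $(F \setminus \{n\}) \cup \{y\}$ is a face of $\Delta$.

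The heart of the proof, and the main obstacle, is to verify that both $\dl_\Delta(n)$ and $\lk_\Delta(n)$ remain extremal, so that the induction hypothesis applies. The deletion $\dl_\Delta(n)$ consists of the facets of $\Delta$ not containing $n$, and these form a colex-initial segment of the $(t+1)$-subsets of $[n-1]$. Similarly, the link $\lk_\Delta(n)$ has facets $\{F \setminus \{n\} : n \in F, \ F \text{ a facet of } \Delta\}$, and these form a colex-initial segment of the $t$-subsets of $[n-1]$. Verifying that these resulting complexes achieve the Kruskal--Katona minimum for their respective dimensions and facet counts requires tracking how the cascade decomposition of $r$ partitions when we split according to whether the maximal vertex is present; this bookkeeping is where the technical weight of the argument sits. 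Once this is established, Lemma~\ref{lem_vd} closes the induction.
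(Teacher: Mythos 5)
The paper does not prove this statement: it is quoted verbatim from Laso\'{n} \cite[Theorem 5]{Lason:2013} and used as a black box, so there is no in-paper proof to compare against.

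Evaluated on its own terms, your proposal has a genuine gap at its foundation. You assert that extremal complexes are, up to relabeling, colex-initial segments of the $(t+1)$-subsets of $[n]$, hence shifted, and your shedding-vertex argument for the maximal vertex $n$ depends entirely on this. But extremality is a numerical condition (the number of ridges achieves the Kruskal--Katona lower bound), and Kruskal--Katona minimizers are not unique up to isomorphism. The $4$-cycle on vertex set $[4]$ is a pure $1$-dimensional complex with $4$ facets and $4$ vertices, which is the Kruskal--Katona minimum for $4$ edges, so it is extremal; yet it is not shifted --- Remark~\ref{remark matroid} of this very paper observes that shifted $1$-dimensional complexes are threshold graphs, hence chordal, which the $4$-cycle is not. (The $4$-cycle is of course still vertex decomposable, so Laso\'{n}'s theorem holds for it, but your route via shiftedness is closed off.) In short, ``extremality is detectable from the $f$-vector'' does not mean all extremal complexes with a given $f$-vector are isomorphic, and the step from the former to the latter is where the argument breaks. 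Beyond that, you explicitly defer the verification that $\dl_\Delta(n)$ and $\lk_\Delta(n)$ remain extremal to unstated bookkeeping; that is precisely where Laso\'{n}'s actual argument carries its weight, and the induction cannot close without it.
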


\begin{example} \label{example fvector}
To see that our result does not follow from Laso\'{n}'s work, let $n =6$ and consider
the pure two-dimensional simplicial complex
with facets 
$$\{1,2,3\},~\{1,4,5\},~\{1,2,6\},~\{1,4,6\},
~\{2,4,5\},~ \{3,4,5\},~\{3,5,6\}$$
and any other 9 facets of dimension 2, for
a total of 16 facets of dimension 2.
In this case, the codimension is $c = 3$, and
$16 \geq \binom{6}{3}-2\cdot 3 +1 = 15$, so, this
simplicial complex is vertex decomposable by
our Theorem \ref{maintheorem}.  By our
construction, this simplicial complex
has $f$-vector $(6,15,16)$ (our initial
choice of facets forces this simplicial
complex to contain all the $1$-dimensional
faces). However, this $f$-vector does not satisfy
\cite[Corollary 1]{Lason:2013}.
Indeed, there exists a pure simplicial complex
with $f$-vector $(6,14,16)$, namely
the simplicial complex which contains
all the facets of dimension two {\it except}
$\{1,5,6\},~ \{2,5,6\}, ~\{3,5,6\},$ and
$\{4,5,6\}$ (this simplicial complex
does not contain the face $\{5,6\}$). We see that our simplicial complex is not
extremal, and hence one can not deduce that it is vertex decomposable from \cite{Lason:2013}.
\end{example}

As a final consequence, we mention a connection
to geometrically vertex decomposable
ideals, as first defined by Klein and
Rajchgot \cite{KR2021}.   While we do not reproduce
the formal definition here, a geometrically
vertex decomposable ideal is a generalization
of the properties of the square-free monomial
ideals that corresponds to a vertex decomposable 
simplicial complex via the Stanley-Reisner
correspondence.  As shown in \cite[Proposition 2.14]{KR2021}, under a suitable 
lexicographical monomial order $<$, 
in some cases we can determine if an
ideal $I$ is a geometrically vertex decomposable ideal from its initial ideal 
${\rm in}_<(I)$.  In particular,
if ${\rm in}_<(I)$ is the Stanley-Reisner
ideal of a vertex decomposable simplicial complex,
then $I$ is geometrically vertex decomposable.  
By combining Klein and Rajchgot's result with our
Theorem \ref{maintheorem}, we get a new technique to
check if an ideal is geometrically vertex decomposable. Precisely, if there is
an appropriate monomial order $<$ such that
the initial ideal ${\rm in}_<(I)$ 
is the Stanley-Reisner ideal 
of a simplicial complex with ``enough'' facets, as determined by Theorem \ref{maintheorem}, then
$I$ is geometrically vertex decomposable.


\subsection*{Acknowledgements} 
The authors thank the anonymous referees for their helpful comments
and improvements.
They are grateful to Hailong Dao for his feedback and suggestions, and also thank Martin Milani\v{c} for answering questions regarding the graph theory literature.
This project began at the workshop ``Interactions 
Between Topological  Combinatorics and 
Combinatorial 
Commutative Algebra'' held at the
Banff International Research Station (BIRS) in April 2023.  
We thank BIRS and the organizers for the  opportunity to work together.
Dochtermann is partly supported by Simons Foundation Grant $\#964659$. 
Nair is partly supported by Dao's Simons Foundation Grant FND0077558.
Van Tuyl's research is supported by NSERC Discovery Grant 2019-05412. 
Woodroofe's research is supported in part by the Slovenian Research Agency (research program P1-0285 and research projects N1-0160, J1-2451, J1-3003 and J1-50000).


\bibliographystyle{amsplain}
\bibliography{arxiv2.bib}

\providecommand{\bysame}{\leavevmode\hbox to3em{\hrulefill}\thinspace}
\providecommand{\MR}{\relax\ifhmode\unskip\space\fi MR }
\providecommand{\MRhref}[2]{%
  \href{http://www.ams.org/mathscinet-getitem?mr=#1}{#2}
}
\providecommand{\href}[2]{#2}
\begin{thebibliography}{10}

\bibitem{Bjorner:1995}
Anders Bj{\"o}rner, \emph{Topological methods}, Handbook of combinatorics,
  {V}ol.\ 1,\ 2, Elsevier, Amsterdam, 1995, pp.~1819--1872. \MR{1373690}

\bibitem{Bjorner/Wachs:1997}
Anders Bj{\"o}rner and Michelle~L. Wachs, \emph{Shellable nonpure complexes and
  posets. {II}}, Trans. Amer. Math. Soc. \textbf{349} (1997), no.~10,
  3945--3975. \MR{98b:06008}

\bibitem{Coleman-Dochtermann-Geist-Oh}
Michaela Coleman, Anton Dochtermann, Nathan Geist, and Suho Oh,
  \emph{Completing and extending shellings of vertex decomposable complexes},
  SIAM J. Discrete Math. \textbf{36} (2022), no.~2, 1291--1305. \MR{4431959}

\bibitem{Diestel:2017}
Reinhard Diestel, \emph{Graph theory}, fifth ed., Graduate Texts in
  Mathematics, vol. 173, Springer, Berlin, 2017. \MR{3644391}

\bibitem{Eagon/Reiner:1998}
John~A. Eagon and Victor Reiner, \emph{Resolutions of {S}tanley-{R}eisner rings
  and {A}lexander duality}, J. Pure Appl. Algebra \textbf{130} (1998), no.~3,
  265--275. \MR{1633767 (99h:13017)}

\bibitem{Herzog/Hibi:2011}
J{\"u}rgen Herzog and Takayuki Hibi, \emph{Monomial ideals}, Graduate Texts in
  Mathematics, vol. 260, Springer-Verlag London Ltd., London, 2011. \MR{2724673
  (2011k:13019)}

\bibitem{KR2021}
Patricia Klein and Jenna Rajchgot, \emph{Geometric vertex decomposition and
  liaison}, Forum Math. Sigma \textbf{9} (2021), Paper No. e70, 23.
  \MR{4329854}

\bibitem{Klivans:2007}
Caroline~J. Klivans, \emph{Threshold graphs, shifted complexes, and graphical
  complexes}, Discrete Math. \textbf{307} (2007), no.~21, 2591--2597.
  \MR{2359603}

\bibitem{Lason:2013}
Micha{\l} Laso\'{n}, \emph{{$f$}-vectors implying vertex decomposability},
  Discrete Comput. Geom. \textbf{49} (2013), no.~2, 296--301. \MR{3017912}

\bibitem{Mahadev/Peled:1995}
N.~V.~R. Mahadev and U.~N. Peled, \emph{Threshold graphs and related topics},
  Annals of Discrete Mathematics, vol.~56, North-Holland Publishing Co.,
  Amsterdam, 1995. \MR{MR1417258 (97h:05001)}

\bibitem{Miller/Sturmfels:2005}
Ezra Miller and Bernd Sturmfels, \emph{Combinatorial commutative algebra},
  Graduate Texts in Mathematics, vol. 227, Springer-Verlag, New York, 2005.
  \MR{2110098 (2006d:13001)}

\bibitem{Provan/Billera:1980}
J.~Scott Provan and Louis~J. Billera, \emph{Decompositions of simplicial
  complexes related to diameters of convex polyhedra}, Math. Oper. Res.
  \textbf{5} (1980), no.~4, 576--594. \MR{593648 (82c:52010)}

\bibitem{Simon}
Robert~Samuel Simon, \emph{Combinatorial properties of ``cleanness''}, J.
  Algebra \textbf{167} (1994), no.~2, 361--388. \MR{1283293}

\bibitem{Terai/Yoshida:2006}
Naoki Terai and Ken-ichi Yoshida, \emph{Stanley-{R}eisner rings with large
  multiplicities are {C}ohen-{M}acaulay}, J. Algebra \textbf{301} (2006),
  no.~2, 493--508. \MR{2236754}

\bibitem{Villarreal:2015}
Rafael~H. Villarreal, \emph{Monomial algebras}, second ed., Monographs and
  Research Notes in Mathematics, CRC Press, Boca Raton, FL, 2015. \MR{3362802}

\bibitem{Woodroofe:2011b}
Russ Woodroofe, \emph{Chordal and sequentially {C}ohen-{M}acaulay clutters},
  Electron. J. Combin. \textbf{18} (2011), no.~1, Paper 208, 20 pages.
  \MR{2853065}

\end{thebibliography}
\end{document}